\def\IC{\mathbb{C}}
\def\ID{\mathbb{D}}
\def\IN{\mathbb{N}}
\def\m{\mathbb{M}}
\def\md{\mathbb{M}^{[d]}}
\def\mm{\mathbb{M}_m}
\def\mn{\mathbb{M}_n}
\def\mnd{\mathbb{M}_n^d}
\def\mmd{\mathbb{M}_m^d}
\def\id{{\rm id}}
\def\idn{{\rm id}_{\IC^n}}
\def\DD{{\bf D}}
\def\ot{\otimes}
\def\rpq{R_{pq}}
\def\Rpq{{\bf R}_{pq}}
\newtheorem{proposition}{Proposition}
\newtheorem{lemma}[proposition]{Lemma}
\newtheorem{theorem}[proposition]{Theorem}
\theoremstyle{definition}
\newtheorem{definition}[proposition]{Definition}
\theoremstyle{remark}
\newtheorem{question}{Question}
\newtheorem{example}[proposition]{Example}
\numberwithin{equation}{section}
\def\be{\begin{equation}}
\def\ee{\end{equation}}
\def\bd{\begin{definition}}
\def\ed{\end{definition}}
\def\O{\Omega}
\def\s{\sigma}
\def\oec{{}{\hspace*{\fill} $\lhd$} \vskip 5pt \par}
\begin{document}

\bibliographystyle{plain}

\title{NC Automorphisms of nc-bounded domains}
\author{John E. M\raise.45ex\hbox{c}Carthy
\thanks{Partially supported by National Science Foundation Grant  
DMS 1300280}
\and Richard Timoney\thanks{
Supported
by the Science Foundation Ireland under grant 11/RFP/MTH3187.
}}
\maketitle

\begin{abstract}
	We establish rigidity (or uniqueness) theorems for nc
	automorphisms which are natural extensions of clasical results
	of H.~Cartan and are improvements of recent results.
	We apply our results to nc-domains consisting of
	unit balls of rectangular matrices.
\end{abstract}

\section{Introduction}

Holomorphic automorphisms of domains in $\IC^d$ have been studied since the work of
H. and E. Cartan in the 1930's \cite{HCartab1930CRp718}, \cite{care35}.
A holomorphic function can be thought of as a generalized polynomial, 
and they can be evaluated not just on tuples of complex numbers, but also on tuples of
commuting matrices or commuting operators whose spectrum is in the domain of the function
\cite{tay70b}.
An nc-function (nc stands for non-commutative) is a generalization of a free polynomial, ({\em i.e.} a
polynomial in non-commuting variables), and it is natural to evaluate them on tuples of matrices or operators.

To describe nc-functions (following
\cite{kvv14} for instance),
we must first establish some notation.
Let $\mn$ denote the $n$-by-$n$ complex matrices, and $\mnd$ the $d$-tuples of
$n$-by-$n$ matrices. 
We shall let $\md $ denote the disjoint union $ \cup_{n=1}^\infty \mnd$.
Given  $x = (x^1, \ldots, x^d)$ in $\mnd$ and $y= (y^1, \ldots, y^d)$ in $\mmd$, by $x \oplus y$ we mean
the element $ (x^1 \oplus y^1, \ldots, x^d \oplus y^d)$ of $\m_{m+n}^d$. If $x \in \mnd$ and $s,t \in \mn$, by 
$sxt$ we mean $(s x^1 t, \dots, s x^dt)$.

A free polynomial $p$ in $d$ variables can be thought of as a function defined
on $\md$, and as such it has  the  following properties:

(i)  If $x $ is in
$\mn^d$, then $p(x) \in \mn$.

(ii) If $x$ and $y$ are in $\md$,
then $p(x \oplus y)  = p(x) \oplus p(y)$.

(iii) If $x \in \mn^d$ and $s \in \mn$ is invertible, then
$p (s^{-1} x s)  = s^{-1} p(x) s$.

\bd
\label{def:ncdomain}
An nc-set is a set  $\Omega \subseteq \md$
 such that  $\Omega_n := \Omega \cap \mnd$ is an open
set for each $n$, and such that  $\Omega$
 is closed with respect to direct sums
and joint unitary equivalence ({\em i.e.} for all $x \in \Omega_n$ and for all $u$ unitary in $\mn$,
we have $u^{-1} x u \in \Omega$). If an nc-set $\Omega$ 
has the property that $\O_n$ is connected for every $n$,
we shall call it an nc-domain.
\ed

An nc-function is a function on an nc-set that mimics the properties (i) - (iii) above of free polynomials.
\bd
\label{def:ncfunct}
An nc-function $f$ on an nc-set $\Omega$ is a function with the following three properties:

(i)  If $x $ is in
$\O_n$, then $f(x) \in \mn$ (we say $f$ is graded if this occurs).

(ii) If $x$ and $y$ are in $\O$,
then $f(x \oplus y)  = f(x) \oplus f(y)$.

(iii) If $x \in \O_n$,   $s \in \mn$ is invertible, and $s^{-1} x s \in \O$,
 then
$f (s^{-1} x s)  = s^{-1} f(x) s$.
\ed

An nc-map $\Phi$ on an nc-domain $\Omega \subseteq \md$ is a $d$-tuple of nc-functions.
If $\Phi$ is an nc-map on $\Omega$ that is also a bijection onto $\Omega$, we call it an nc-automorphism.

Bounded symmetric domains in $\IC^d$ have been characterized by
E. Cartan
\cite{care35}, and in the course of the proof automorphisms of such
domains were described.

 We are interested in the following questions about nc-automorphisms.

\begin{question}
\label{q1}  (Rigidity) If $\Phi$ is an nc-automorphism of $\Omega$, is it uniquely determined by its action
on $\Omega \cap \mnd$ for some fixed $n$?
\end{question}

\begin{question}
\label{q2}
(Extendibility) If $F: \Omega \cap \mnd \to \Omega \cap \mnd$ is a biholomorphic map which respects similarities, is there an nc-automorphism $\Phi: \Omega \to \Omega$ such that 
$\Phi |_{\Omega \cap \mnd} = F $?
\end{question}

\begin{question}
\label{q3}
What groups can arise as the automorphism group of an  nc-domain?
\end{question}

A set $\O \subset \md$ is called {\em nc-bounded} if for each $n$, there exists a constant $M_n$
such that
\[
\forall \ z \ \in \O_n, \quad \| z \| < M_n .
\]

In Theorem~\ref{propa} we show that one always has rigidity on nc-bounded domains that contain the origin.
For such domains, the possible automorphism groups are therefore no more than certain subgroups 
of the automorphism groups of bounded domains (and  we
substantially answer questions \ref{q1}, \ref{q2} and \ref{q3} for
circular bounded domains that contain the origin).
In Example \ref{exd2}, we show that many different domains can have the same automorphism groups.

In Section~\ref{secd}, we consider the Cartan domain of type I, the set $\rpq$ of $p$-by-$q$ contractive matrices.
The obvious nc-domain containing this, where numbers are replaced by $n$-by-$n$ matrices,
we call $\Rpq$. 
In Theorem~\ref{thme1}, we show that when $p \neq q$,  all automorphisms of $\rpq$ extend;
but when $p=q$, only those automorphisms that do not involve the transpose extend.

In this paper, we restrict our attention to nc-bounded domains, as the unbounded case is much more
complicated (see {\em e.g.} Example~\ref{ex3}).

This note continues work of Popescu in \cite{po06,po10,po11} and
of Helton, Klep, McCullough and Slinglend,
in \cite{hkms09}.

\section{Background on nc-functions}

The recent monograph \cite{kvv14} by D. Kaliuzhnyi-Verbovetskyi \& V. Vinnikov gives an introduction to nc-functions.
Unless an additional hypothesis of 
continuity (or boundedness) is added,
 nc-functions can behave badly.

\begin{example}
Let $d=1$, and  define a function $f$ on Jordan blocks by
 sending a Jordan block with $0$ eigenvalues to the zero matrix of the same size, and a Jordan block
 with non-zero eigenvalues to the identity matrix of that size. Extend $f$ by direct sums to any matrix in Jordan canonical form, and then by similarity to any matrix. The function $f$ is then an nc-function which is manifestly discontinuous.
 \end{example}
 
Let $\sigma$ denote the disjoint union topology on $\md$: a set $U$ is in $\sigma$ if and only if
$U \cap \mnd$ is open for every $n$. (This topology is called the
finitely open topology in \cite{kvv14}).

It was proved in \cite{amfree} that if an nc-function $f$ on an nc-set $\O$ is $\s$ locally bounded,
in the sense that 
\[
\forall \ z \ \in \O,\ \exists\ U \in \s \ {\rm s.t.\ }  z \in U \ {\rm and\ } f |_{\O \cap U} \ {\rm is\ bounded},
\]
then $f: \Omega \to \m^{[1]}$ is $\s$-$\s$ continuous, and in \cite{hkm11b}, it was shown that this 
in turn implied that $f$ was holomorphic, in the sense that
\be
\label{eqb1}
\forall n \in \IN,\ \forall z \in \O_n, \ \forall h \in \mnd, \quad \exists \
Df(z) [h] \ = \
\lim_{t \to 0} \frac{1}{t} [ f(z + th) - f(z) ] .
\ee

Putting these results together, we conclude
\begin{proposition}
\label{propb1} An nc-map $f$ into an nc-bounded domain   is automatically
$\s$-$\s$ continuous, and holomorphic in the sense of \eqref{eqb1}.
\end{proposition}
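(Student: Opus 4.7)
The proof looks like a direct concatenation of the two cited results, so the plan is mainly to verify that the hypothesis of the first cited result (\cite{amfree}) is automatic under our assumption.

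The approach is as follows. First, I would observe that if $f$ maps into an nc-bounded domain, then $f$ is $\sigma$-locally bounded in the sense required. Indeed, given $z \in \Omega_n$, take $U = \mathbb{M}_n^d$, which is a $\sigma$-open set containing $z$ (its intersection with $\mathbb{M}_k^d$ is empty, hence open, for $k\neq n$, and it equals $\mathbb{M}_n^d$ for $k=n$). Because the target nc-domain, say $\Omega'$, is nc-bounded, there is an $M_n$ with $\|w\|<M_n$ for all $w\in\Omega'_n$; since $f$ is graded, $f(\Omega\cap U)=f(\Omega_n)\subseteq \Omega'_n$, which is bounded. Hence $f|_{\Omega\cap U}$ is bounded, so $f$ is $\sigma$-locally bounded.

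Second, I would reduce the vector-valued case to the scalar case: an nc-map is by definition a $d$-tuple of nc-functions, and $\sigma$-$\sigma$ continuity of the tuple is equivalent to $\sigma$-$\sigma$ continuity of each coordinate, since the $\sigma$ topology on $\mathbb{M}^{[d]}$ is the product of the $\sigma$ topologies on $\mathbb{M}^{[1]}$ level by level. Each coordinate is still $\sigma$-locally bounded (use the same $U$ as above, combined with the fact that norm bounds on a $d$-tuple give norm bounds on each entry). Then the result of \cite{amfree} applied to each coordinate gives that each component of $f$ is $\sigma$-$\sigma$ continuous, and hence so is $f$.

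Third, I would invoke the result of \cite{hkm11b}: any $\sigma$-$\sigma$ continuous nc-function is holomorphic in the sense of \eqref{eqb1}. Applying this coordinate-wise yields the existence of the directional derivative $Df(z)[h]$ for every $z\in\Omega_n$ and every $h\in\mathbb{M}_n^d$, completing the proof.

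There is no real obstacle; the only subtlety to articulate carefully is the passage from scalar- to vector-valued nc-functions, which is purely formal because $\sigma$-$\sigma$ continuity and existence of directional derivatives are both coordinate-wise properties. The substantive content is entirely in the two cited theorems, so this proposition is best presented as a short remark that packages them for later reference.
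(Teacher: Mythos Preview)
Your proposal is correct and matches the paper's approach exactly; in fact the paper gives no explicit proof at all, merely writing ``Putting these results together, we conclude'' after stating the two cited theorems. Your verification that nc-boundedness of the target gives $\sigma$-local boundedness, and your reduction to the scalar case, simply fill in details the paper leaves implicit.
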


\section{Rigidity}
\label{secb}

A domain is called {\em circular} if it is invariant under multiplication by unimodular scalars.

The following lemmas are classical and due to H. Cartan.

\begin{lemma}[H. Cartan ({\cite[Th\'eor\`eme VII, p. 30]{car31}})]
	\label{lem:caruniqueness}
	Let $\mathcal{D} \subseteq \IC^d$ be a bounded domain, $z_0 \in
	\mathcal{D}$ and
	$\phi
	\colon \mathcal{D} \to \mathcal{D}$
	a biholomorphic automorphism with $\phi(z_0) = z_0$ and
	$\phi'(z_0) =
	I_n$. Then $\phi$ is the identity.
\end{lemma}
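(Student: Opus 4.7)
The plan is to expand $\phi$ in a power series around $z_0$ and iterate. Suppose for contradiction that $\phi \neq \mathrm{id}$. Since $\phi(z_0) = z_0$ and $\phi'(z_0) = I$, I can write
\[
\phi(z) \;=\; z_0 + (z - z_0) + P_m(z - z_0) + R(z - z_0),
\]
where $P_m$ is the lowest degree nonzero homogeneous term (of some degree $m \geq 2$) and $R$ vanishes to order at least $m+1$ at the origin.

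Next, I would show by induction on $k \geq 1$ that the $k$-th iterate $\phi^{\circ k} = \phi \circ \phi \circ \cdots \circ \phi$ satisfies
\[
\phi^{\circ k}(z) \;=\; z_0 + (z - z_0) + k\, P_m(z - z_0) + R_k(z - z_0),
\]
with $R_k$ vanishing to order $\geq m+1$. The base case is just the expansion above. For the induction step, substitute $\phi^{\circ k}(z)$ into $\phi$, using the identity $(z - z_0) \mapsto (z - z_0) + k P_m(z-z_0) + R_k(z-z_0)$: the linear part contributes one more copy of $P_m$, the $P_m$ part composed with this already-quadratic-or-higher perturbation only contributes terms of degree $\geq 2m \geq m+1$, and the $R$ part contributes only terms of degree $\geq m+1$. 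Thus the $P_m$ coefficient advances from $k$ to $k+1$, as required.

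Finally, since $\phi^{\circ k}$ is an automorphism of $\mathcal{D}$ for every $k$, and $\mathcal{D}$ is bounded, the family $\{\phi^{\circ k}\}$ is uniformly bounded. Choose a closed polydisc $\overline{\Delta}(z_0, r) \subset \mathcal{D}$ and a bound $M$ with $\|w\| \leq M$ for all $w \in \mathcal{D}$. Applying the Cauchy estimates to the component functions of $\phi^{\circ k}$ on this polydisc, the coefficients of the degree-$m$ homogeneous part are bounded by $M/r^m$, independently of $k$. Therefore $\|k P_m\| \leq M/r^m$ for every $k \in \IN$, which forces $P_m \equiv 0$, contradicting the choice of $m$. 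Hence $\phi = \mathrm{id}$.

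The only real obstacle is the inductive step for the iterate formula; it is essentially a bookkeeping calculation, but one must carefully verify that every cross term arising from composing $P_m$ with $\phi^{\circ k}$ has degree at least $m+1$, so that the degree-$m$ homogeneous component increases by exactly one $P_m$ at each iteration. Once this is clear, the Cauchy-estimate step closes the argument immediately.
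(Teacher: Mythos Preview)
Your argument is correct and is essentially Cartan's original proof: iterate $\phi$, observe that the lowest-order non-identity homogeneous term $P_m$ gets multiplied by $k$ in $\phi^{\circ k}$, and then use boundedness of $\mathcal{D}$ together with the Cauchy estimates to force $P_m = 0$. The bookkeeping in the inductive step is exactly as you describe; since $P_m$ is homogeneous of degree $m \geq 2$, any cross term obtained by inserting a factor of order $\geq m$ into $P_m$ has order at least $2m-1 \geq m+1$, so the degree-$m$ part is unaffected.

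As for comparison with the paper: the paper does not supply its own proof of this lemma. It is stated as a classical result of H.~Cartan with a citation to \cite{car31}, and is then used as a black box in the proof of Theorem~\ref{propa}. So there is nothing to compare against; your write-up simply fills in the classical argument that the paper takes for granted.
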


\begin{lemma}[{\cite{HCartab1930CRp718}, \cite[Th\'eor\`eme VI]{car31}}]
	\label{lem2}
	If $\mathcal{D}$ is a bounded circular domain 
 in $\IC^d$ containing $0$, and
	$F \colon \mathcal{D} \to \mathcal{D}$ is a biholomorphic
	automorphism of $\mathcal{D}$ with $F(0) =0$, then $F$
	is the restriction to $\mathcal{D}$ of an invertible linear
	map.
\end{lemma}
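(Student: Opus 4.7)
The plan is to follow the classical Cartan argument, using the circular symmetry to manufacture an automorphism to which the first Cartan lemma (Lemma~\ref{lem:caruniqueness}) applies, and then to read off linearity from the resulting commutation relation via a power series expansion at the origin.

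First I would set up the rotation automorphisms. For each $\theta \in \IR$, let $r_\theta \colon \mathcal{D} \to \mathcal{D}$ be the map $r_\theta(z) = e^{i\theta} z$; this is a biholomorphic automorphism of $\mathcal{D}$ precisely because $\mathcal{D}$ is circular (and contains $0$). Given the hypothesized $F$, define
\[
G_\theta := r_\theta^{-1} \circ F^{-1} \circ r_\theta \circ F.
\]
Then $G_\theta$ is a biholomorphic automorphism of $\mathcal{D}$, and $G_\theta(0) = 0$ because each of the four factors fixes $0$. Writing $A = F'(0)$, a straightforward chain-rule computation at $0$ gives
\[
G_\theta'(0) = (e^{-i\theta} I) \cdot A^{-1} \cdot (e^{i\theta} I) \cdot A = I.
\]
By Lemma~\ref{lem:caruniqueness}, $G_\theta$ is the identity, which is equivalent to the commutation relation $F \circ r_\theta = r_\theta \circ F$ on $\mathcal{D}$ for every $\theta$.

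Next I would exploit this commutation via power series. Since $F$ is holomorphic on $\mathcal{D}$ and $0 \in \mathcal{D}$, $F$ admits a convergent homogeneous expansion $F(z) = \sum_{k \geq 1} F_k(z)$ in a neighborhood of $0$, where $F_k$ is a $\IC^d$-valued polynomial homogeneous of degree $k$ (the constant term vanishes because $F(0) = 0$). The identity $F(e^{i\theta} z) = e^{i\theta} F(z)$ becomes $\sum_k e^{ik\theta} F_k(z) = \sum_k e^{i\theta} F_k(z)$, and uniqueness of Fourier coefficients forces $F_k \equiv 0$ for all $k \neq 1$. So $F = F_1$ on a neighborhood of $0$, and by the identity principle $F$ coincides with the linear map $F_1$ on all of $\mathcal{D}$. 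Invertibility of $F_1$ follows from $F$ being a biholomorphism (its inverse $F^{-1}$ is, by the same argument applied to $F^{-1}$, also linear, and the two compose to the identity).

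The main conceptual step is producing $G_\theta$ and verifying $G_\theta'(0)=I$; once that is in hand, Lemma~\ref{lem:caruniqueness} does the heavy lifting and the rest is a Fourier-coefficient bookkeeping. No substantive obstacle is expected, since boundedness of $\mathcal{D}$ is used only to invoke Lemma~\ref{lem:caruniqueness}, and circularity is used only to guarantee that $r_\theta$ maps $\mathcal{D}$ into itself.
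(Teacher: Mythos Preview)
Your argument is correct and is precisely the classical Cartan proof: conjugate $F$ by the rotation $r_\theta$, apply Lemma~\ref{lem:caruniqueness} to the resulting automorphism $G_\theta$ (which fixes $0$ with derivative $I$), deduce $F(e^{i\theta}z)=e^{i\theta}F(z)$, and read off linearity from the homogeneous expansion. The paper itself does not supply a proof of this lemma; it is quoted as a classical result of H.~Cartan with references, so there is nothing further to compare.
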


\begin{theorem}
\label{propa}
Let $\Omega$ be an nc-domain that is  nc-bounded.
Let $\Phi = (\Phi^1,  \ldots, \Phi^d)$ be an nc-automorphism of $\Omega$.
Suppose that for some $m \in \IN$, we have $0 \in \Omega \cap \mmd $ and
$(\Phi |_{ \Omega_m})(0) = 0$.
\begin{enumerate}[(i)]
	\item
		If in addition
$(\Phi |_{ \Omega_m})'(0)$ is the identity,
then  $\Phi$ is the identity on all of $\Omega$.

\item
	If instead we suppose also that
	$\Omega$ is a circular  nc-domain,
then there is an invertible linear map $F$ on $\IC^d$ such that
$\Phi(Z) = (F \otimes \id_n)(Z)$ for $Z \in \Omega_n$ (by which we mean
that each $d$-tuple 
$( \Phi(Z) )_{i,j}$
formed from the $(i,j)$ coordinates
of the $d$-tuple of
$n \times n$ matrices $\Phi(Z)$ is
given by $F (Z_{i,j})$
where $Z = (Z^1, \ldots, Z^d)$, and $Z_{i,j} = (Z^1_{i,j}, \ldots,
Z^d_{i,j})$ again denotes the $(i,j)$ coordinates).
\end{enumerate}
\end{theorem}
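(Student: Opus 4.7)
The strategy is to apply the classical Cartan lemmas at the distinguished level $m$, and then to bootstrap to all other levels of $\Omega$ using the nc-axioms of direct-sum additivity and similarity equivariance, together with a Schur-type decomposition of endomorphisms of $\m_n^d \cong \IC^d \ot \m_n$.

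For part (i), Lemma~\ref{lem:caruniqueness} applies to the bounded connected domain $\Omega_m$ and immediately gives $\Phi|_{\Omega_m} = \id$. To propagate to arbitrary multiples of $m$, fix $k \geq 2$ and note that $0 \in \Omega_{km}$ by direct-sum closure. Lemma~\ref{lem:caruniqueness} at level $km$ then reduces the problem to verifying that $L := (\Phi|_{\Omega_{km}})'(0) = \id$. The direct-sum property, combined with $\Phi|_{\Omega_m} = \id$, shows that $L$ acts as the identity on block-diagonal directions with blocks of size $m$. The similarity property extends to full $GL_{km}$-equivariance (by analytic continuation, since $\Phi(s^{-1}xs) = s^{-1}\Phi(x)s$ is polynomial in $s$ and is valid on an open set in $GL_{km}$), so $L$ commutes with conjugation on the second tensor factor of $\m_{km}^d \cong \IC^d \ot \m_{km}$. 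Schur's lemma applied to $\m_{km} =$ scalars $\oplus$ traceless matrices forces $L = A \ot P_{\mathrm{tr}} + B \ot P_{\mathrm{traceless}}$, and evaluating at the block-diagonal element $v \ot (I_m \oplus 0_{(k-1)m})$ forces $A = B = \id$. Hence $\Phi|_{\Omega_{km}} = \id$ for every $k \geq 1$. Finally, given $Z \in \Omega_n$ arbitrary, the $m$-fold amplification $Z^{(m)} = Z \oplus \cdots \oplus Z \in \Omega_{nm}$ satisfies $\Phi(Z)^{(m)} = \Phi(Z^{(m)}) = Z^{(m)}$, which forces $\Phi(Z) = Z$.

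For part (ii), Lemma~\ref{lem2} replaces Lemma~\ref{lem:caruniqueness} at level $m$, yielding an invertible linear $F_m : \m_m^d \to \m_m^d$ with $\Phi|_{\Omega_m} = F_m|_{\Omega_m}$. The same $GL_m$-equivariance plus Schur argument gives $F_m = A \ot P_{\mathrm{tr}} + B \ot P_{\mathrm{traceless}}$. Applying Lemma~\ref{lem2} at level $2m$ (which is also bounded, circular, connected, and contains $0$) produces a similar $F_{2m} = A' \ot P_{\mathrm{tr}} + B' \ot P_{\mathrm{traceless}}$, and the direct-sum compatibility $F_{2m}(x \oplus y) = F_m(x) \oplus F_m(y)$ (extended from $\Omega_m \times \Omega_m$ to all of $\m_m^d \times \m_m^d$ by linearity), evaluated on an appropriate test pair combining scalar and traceless parts, forces $A = A' = B' = B$. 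Setting $F := A \in GL(\IC^d)$ gives $F_m = F \ot \id_m$. Repeating the analysis at each level $nm$ (the same direct-sum compatibility with level $m$ forces the corresponding Schur coefficients to coincide with $F$) gives $\Phi|_{\Omega_{nm}} = (F \ot \id_{nm})|_{\Omega_{nm}}$, and the amplification trick from part (i) delivers $\Phi(Z) = (F \ot \id_n)(Z)$ for $Z \in \Omega_n$ at arbitrary $n$.

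The principal obstacle is extracting the diagonal form $F \ot \id$ from Cartan's conclusion. The Schur output $A \ot P_{\mathrm{tr}} + B \ot P_{\mathrm{traceless}}$ is a priori more general than $F \ot \id$, and forcing $A = B$ demands combining $GL$-equivariance at a single level with direct-sum compatibility across levels. The corresponding step in part (i), namely the derivative-at-zero calculation showing $(\Phi|_{\Omega_{km}})'(0) = \id$, is the same computation in disguise. Once these Schur-plus-direct-sum rigidity steps are in place, the amplification trick $Z \mapsto Z^{(m)}$ that handles arbitrary levels $n$ is essentially formal.
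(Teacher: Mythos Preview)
Your argument is correct and follows the same overall architecture as the paper's proof: apply Cartan's lemmas at level $m$, propagate to levels $km$ by showing the relevant linear map (the derivative at $0$ in (i), the Cartan-linear map in (ii)) has the form $F\ot\id$, then reach arbitrary $n$ via the $m$-fold amplification $Z\mapsto Z^{(m)}$. The difference is in how you pin down that linear map. The paper works concretely with matrix units: it tests on $E_{1,1}$-supported tuples using similarities $s=1\oplus t$, uses the level-$2m$ direct-sum relation to kill a spurious $\alpha I_{m-1}$ term, and then transports to all $E_{i,k}$ via permutations and elementary similarities $I-E_{i,k}$; for general $km$ it passes through the density of diagonalisable matrices. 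You instead invoke the $GL_{km}$-module decomposition $\m_{km}=\IC I\oplus\mathfrak{sl}_{km}$ and Schur's lemma to write any conjugation-equivariant endomorphism of $\IC^d\ot\m_{km}$ as $A\ot P_{\mathrm{tr}}+B\ot P_{\mathrm{traceless}}$, after which a single block-diagonal test element (having nonzero components in both summands) forces $A=B$. Your route is more conceptual and sidesteps the density argument; the paper's is more elementary in that it never appeals to representation theory. One small wording issue: your parenthetical ``analytic continuation, polynomial in $s$'' justification for full $GL_{km}$-equivariance is not quite right (nothing here is polynomial in $s$); the simpler and correct observation, which the paper also uses, is that for each fixed invertible $s$ one may take $Z$ small enough that both $Z$ and $s^{-1}Zs$ lie in $\Omega_{km}$, so differentiating the similarity identity at $Z=0$ gives equivariance of the derivative for every $s$.
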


\begin{proof}
\begin{enumerate}[(i)]
	\item
		By Lemma~\ref{lem:caruniqueness}, $\Phi |_{ \Omega_m}$
		is the identity.

As $\Omega_m = \Omega \cap \mmd$ is open, there is some $\varepsilon >0$
such that if $z = (z^1, \dots, z^d) \in \mmd$ has each $\| z^j \| <
\varepsilon$, then $z \in \Omega_m$.

Now, fix a positive integer $k$.
For $Z^j \in \m_{km}$, we
write $\hat{Z}^j$ for the $d$-tuple 
$ (Z^1, \ldots, Z^d) \in \m_{km}^d$
that has $Z^i = 0$ when $i \neq j$, and whose $j^{\rm th}$ entry is $Z^j$.
If $Z^j$ is 
the
direct sum of $k$ matrices from $\mm$,
 and if $\| Z^j \| \leq \varepsilon$,
 then $\Phi(\hat{Z}^j) = \hat{Z}^j$, by the direct sum property of nc-maps. 
As this applies to $\zeta Z^j$ for $|\zeta| < 1$ ($\zeta \in \IC$),
it follows that the directional derivative
$((\Phi | _{\Omega_{km}})'(0))(\hat{Z}^j) = \hat{Z}^j$.
By linearity of the Fr\'echet derivative $(\Phi | _{\Omega_{km}})'(0)$
we may drop the restriction that $\|Z^j\| < \varepsilon$.
In particular the conclusion holds when $Z^j \in \m_{km}$ is a diagonal matrix.

By the chain rule and similarity invariance
\[
	(\Phi | _{\Omega_{km}}) (s^{-1} Z  s) = 
	s^{-1} (\Phi | _{\Omega_{km}}) ( Z  )  s 
\]
of the map $\Phi |
_{\Omega_{km}}$ (valid for all
sufficiently small $Z$ once $s$ is fixed),
we must have that $(\Phi | _{\Omega_{km}})'(0)$ has the
invariance property
\begin{equation}
	\label{eqn:siminv-deriv}
	((\Phi | _{\Omega_{km}})'(0)) (s^{-1} Z  s) = 
	s^{-1} (((\Phi | _{\Omega_{km}})'(0)) ( Z  ))  s 
\end{equation}
Choosing $Z^j$ diagonalisable and $s$ such that $s^{-1}Z^j s$ is diagonal
(\ref{eqn:siminv-deriv}) yields $s^{-1} (((\Phi | _{\Omega_{km}})'(0)) (
\hat{Z}^j  ))  s  = s^{-1} \hat{Z}^j s$ and hence 
$((\Phi | _{\Omega_{km}})'(0))(\hat{Z}^j) = \hat{Z}^j$ provided
$Z^j$
is diagonalisable. By density of the
diagonalisable matrices we can then make the same conclusion with $Z^j$
arbitrary and then linearity of $(\Phi | _{\Omega_{km}})'(0)$ forces it
to be the identity.

By Lemma~\ref{lem:caruniqueness} again, $\Phi |_{ \Omega_{km}}$
		must be the identity.

Now choose some $n$, not necessarily a multiple of $m$, such that 
$\Omega_n$ is non-empty. Let 
 $Z \in \Omega_n$. The direct sum of $m$ copies of $Z$ is in 
$ \Omega_{mn}$, and $\Phi (\oplus_{i=1}^m Z) = \oplus_{i=1}^m Z$ 
by the first part of the proof.
As $\Phi$ preserves direct sums, this means that $\Phi(Z) = Z$.

\item
By
Lemma~\ref{lem2}, we know $\Phi|_{\Omega_{m}}$ is linear,
If $m =1$ we take $F$ to be $\Phi|_{\Omega_{m}}$. For $m > 1$ we need
a brief argument to find $F$.

Similarity invariance
\[
	(\Phi | _{\Omega_{m}}) (s^{-1} Z  s) = 
	s^{-1} (\Phi | _{\Omega_{m}}) ( Z  )  s 
\]
(guaranteed by Definition~\ref{def:ncfunct}
to hold for $Z, s^{-1}Zs \in \Omega_{m}$)
must hold globally for $Z \in \mmd$ in view of linearity.
Let $E_{i,k}$ denote the standard matrix units in $\m_m$ and
choose $Z = (z^1 E_{1,1}, \ldots, z^d E_{1,1})$ (so that $Z$ is
supported on the 
$(1,1)$ entries).
Consider a block diagonal
$s =1 \oplus t$ with 1 in the $(1,1)$ entry but arbitrary invertible
$(m-1) \times (m-1)$ block $t$. Since the matrices that commute with all
such $1 \oplus t$ are those of the form $F \oplus \alpha I_{m-1}$ (for
scalars $F$ and $\alpha$)
we see that $\Phi|_{\Omega_{m}}
(Z)$ must take the form
\[
		(F^1(z) \oplus
		\alpha^1(z) I_{m-1},
\ldots, 
F^d(z) \oplus
\alpha^d(z) I_{m-1}),
\mbox{ with } z = (z^1, \ldots , z^d),
\]
for some scalar-valued linear $F^1, \ldots, F^d, \alpha^1,\ldots,
\alpha^d \colon \IC^d \to \IC$.

However, notice that 
(by Lemma~\ref{lem2}) we also know that
$\Phi|_{\Omega_{2m}}$ is linear, and by the direct sum property
$\Phi(Z \oplus 0) = \Phi(Z) \oplus 0$. The similarity argument applied
to $\m_{2m}^d$ forces $\alpha^j = 0$ ($1 \leq j \leq d$). Thus
\[
	\Phi |_{\Omega_m}(z^1 E_{1,1}, \ldots, z^d E_{1,1}) =
	(F^1(z) E_{1,1},
\ldots,\\
F^d(z) E_{1,1} )
\]
(with $ z = (z^1, \ldots, z^d)$).
Using similarity with $s$ a transposition allows us to conclude that the
same must hold for $E_{k,k}$ replacing $E_{1,1}$. Taking $s = I_n -
E_{i,k}$ for $i \neq k$ we have
$s^{-1} E_{ii} s  = E_{ii} - E_{ik}$, and together with linearity we
deduce the relation with $E_{k,k}$ replaced by $E_{i,k}$.
Clearly $F = (F^1, \ldots, F^d)  \colon \IC^d \to \IC^d$ must be
invertible (since $\Phi$ is) and we have the desired conclusion on
$\Omega_m$.

By
Lemma~\ref{lem2}, we know $\Phi|_{\Omega_{km}}$ is linear for each $k
\in \IN$.
For $Z^j \in \m_{km}$,
if $Z^j$ is diagonal we
must have $\Phi(\hat{Z}^j)$ of the required form. The similarity property and
density of the diagonalisable matrices in $\m_{km}$ allows us to extend
to arbitrary nonzero $Z^j$. Then by linearity this extends
to arbitrary $d$-tuples
$Z = (Z^1, \ldots, Z^d) \in \m_{km}^d$.

Finally if $\Omega_n$ is nonempty for some $n$ we can apply the result
just obtained for $\Omega_{nm}$ together with the direct sum property
for $Z \oplus 0 \oplus \cdots \oplus 0$ (where
$Z \in \m_n$ and we have $(m-1)$ zero
summands) to
establish the desired conclusion for $\Phi |_{\Omega_n}$.
\qedhere
\end{enumerate}
\end{proof}

Popescu's Cartan uniqueness results \cite[\S1]{po10} can be viewed
as similar in spirit for the case of special domains (row
contractions) to Theorem~\ref{propa}. In the case $m = 1$,
the result of \cite[Corollary 4.1 (2)]{hkm11b} is part
(i) while \cite[Theorem 21]{hkm12} implies (ii).

\begin{example}
\label{ex1}
The matrix polydisk. This is the set $$
\DD \ =\ \{ x \in \md : \| x^j \| < 1, 1 \leq j \leq d \}. $$
The set of automorphisms of $\DD$ is the set
\begin{equation}
\label{eqc1}
\{ \Phi(x) = \sigma \circ (m^1(x^1), \dots, m^d(x^d)) : \sigma \in {\bf S}_d, m^j \in {\rm Aut}(\ID) \} .
\end{equation}
Here ${\bf S}_d$ is the symmetric group on $d$ variables, and $ {\rm Aut}(\ID)$ is the M\"obius group of automorphisms of the disk. Each M\"obius transformation of the form
\[
m(z) \ = \ e^{i\theta} \frac{z - a}{1 -\bar a z} 
\]
extends to matrices in the obvious way:
\[
m(Z) \ = \ e^{i\theta} (Z - a I)(1 -\bar a Z)^{-1} .
\]
Indeed, by von Neumann's inequality, every $\Phi$ in \eqref{eqc1} extends to an automorphism
of $\DD$. That this comprises everything follows from observing that all automorphisms of
the polydisk $\ID^d = \DD \cap \m_1^d$ are of this form, and so by Theorem~\ref{propa} they
have a unique extension to higher levels. As they are invertible, they must be automorphisms.
\oec
\end{example}

\begin{example}
\label{ex3}
Theorem \ref{propa} fails if boundedness is dropped.
Consider, for example, the nc-set
\[
\Omega \ = \
\{ (x,y,z) \in \m^{[3]} : \| xy - yx \| < 1 \}.
\]
Let 
\[
\Phi(x,y,z) \ = \ (x, y, z + h(xy-yx) ),
\]
where $h : \IC \to \IC$ is any non-constant entire function mapping $0$ to $0$.
Then $\Phi$ is an automorphism, and $\Phi |_{\Omega \cap \m_1^3}$ is the identity,
but $\Phi$ is not the identity on level 2.
\oec
\end{example}

\section{Extendibility in $\Rpq$}
\label{secd}

Let $\rpq$ denote the $p$-by-$q$ matrices of norm less than $1$.
We shall extend this to an nc-domain in $\md$, where $d = pq$, by
\[
\Rpq \ :=\
\bigcup_{n=1}^\infty 
\{ (x^1, x^2, \dots, x^d) \in \mnd :
\left \| 
\begin{pmatrix}
x^1 & \dots & x^q \\
x^{q+1} & \dots & x^{2q} \\
\vdots \\
x^{(p-1)q +1} & \dots & x^{pq}
\end{pmatrix}
\right\| < 1 \} .
\]
 Let $\gamma: \mnd \to \m_{pq}^d$ be the map that takes $d$ matrices 
and makes them into a block $p$-by-$q$ matrix, filling in left to right and then top to bottom.
Then $\Rpq = \{ x : \| \gamma (x) \| < 1 \}$. When we speak of an nc automorphism of
$\Rpq$, strictly speaking we mean an nc automorphism of $\gamma^{-1} (\Rpq)$.

In the special case $q =1$, $R_{p1}$ is just the unit ball in $\IC^p$, and its automorphisms are well-known (see {\em e.g.} \cite[Thm. 2.2.5]{rud80}).
The set ${\bf R}_{1q}$, the row-contractions, 
was studied by G. Popescu in \cite{po91,po06} and \cite{po10}.

The automorphisms of $\rpq$ are given by a similar formula to the case of the ball. 
L. Harris showed \cite{Ha73} that  they are of the following form.

\begin{theorem}
\label{thm:ha}
[Harris]
Every holomorphic  automorphism of $\rpq$ is of the form
$L H_A$ where $L$ is a linear isometric automorphism of $\rpq$, 
 $A$ is an element of $\rpq$,
and 
\[
H_A(x) \ = \
( I_{p}  -  A A^* )^{-1/2}  
  (x  + A)
(I_{q} + A^*  x )^{-1}
( I_{q}  -   A^* A)^{1/2} . 
\] 
\end{theorem}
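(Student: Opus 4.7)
The plan is to classify automorphisms of $\rpq$ by combining a transitivity argument with H.~Cartan's Lemma~\ref{lem2}. Specifically, I would first show that the maps $H_A$ are holomorphic automorphisms of $\rpq$ whose inverses satisfy $H_A^{-1}(0) = -A$, so that the family $\{H_A\}_{A\in \rpq}$ acts transitively on the base point $0$. Then an arbitrary automorphism $\Phi$ can be composed with a suitable $H_A$ so as to fix $0$, and the composite is forced by Lemma~\ref{lem2} to be the restriction of an invertible linear map, yielding the factorization $\Phi = L \circ H_A$.

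The first nontrivial step is to verify that $H_A$ maps $\rpq$ into itself. Well-definedness is immediate from $\|A\|<1$: the strict positivity of $I_p - AA^*$ and $I_q - A^*A$ gives meaning to the square roots and their inverses, and $\|A^* x\|<1$ makes $I_q + A^* x$ invertible. To see $H_A(\rpq)\subseteq \rpq$, I would derive a Potapov-type matrix identity expressing $I_p - H_A(x)H_A(x)^*$ as a two-sided congruence of $I_p - xx^*$ by the invertible factor $(I_p - AA^*)^{1/2}(I_p + xA^*)^{-1}$ and its adjoint. The identity is verified by clearing denominators and applying the basic intertwining relation $A\,(I_q - A^*A)^{1/2} = (I_p - AA^*)^{1/2}\,A$, which follows from $A\, f(A^* A) = f(AA^*)\, A$ for any polynomial (hence continuous) $f$. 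The resulting congruence forces $\|H_A(x)\|<1$.

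Next I would identify the inverse. Direct substitution shows $H_A(-A) = 0$, since the numerator $(-A)+A$ vanishes. A parallel (but longer) manipulation, again using the intertwining relation and a Woodbury-type identity for $(I_q + A^*x)^{-1}$, shows that $H_A \circ H_{-A}$ is the identity on $\rpq$. Thus each $H_A$ is a biholomorphic automorphism, and $A \mapsto H_A^{-1}(0) = -A$ surjects $\rpq$ onto itself.

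Finally, given an arbitrary holomorphic automorphism $\Phi$ of $\rpq$, set $A := -\Phi^{-1}(0) \in \rpq$ and $L := \Phi \circ H_{-A}$. Then $L(0) = \Phi(-A) = \Phi(\Phi^{-1}(0)) = 0$. Since $\rpq$ is a bounded circular domain containing $0$, Lemma~\ref{lem2} forces $L$ to be the restriction of an invertible linear map; being a self-bijection of $\rpq$, it is a linear isometric automorphism. Composing with $H_A$ on the right gives $\Phi = L \circ H_A$, as required. The main obstacle is the Potapov-type identity in step two: it is a standard but nontrivial piece of operator-matrix algebra, and once it is in hand the remainder of the argument is essentially formal manipulation of the explicit formula.
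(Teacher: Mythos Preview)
The paper does not supply a proof of this theorem; it is quoted as a known result of Harris \cite{Ha73}. Your argument is correct and is essentially the standard one. In fact, the key computation you need --- that $H_A$ is a self-map with inverse $H_{-A}$ --- is carried out in the paper (at the nc level) in equation \eqref{eqc7}: setting $W=Z$ there gives the companion identity to your Potapov-type formula for $I_p - H_A(x)H_A(x)^*$, and the observation that $H_{-A}$ inverts $H_A$ is stated just after \eqref{eqc7}.

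One step you pass over quickly deserves a sentence: you assert that a linear self-bijection $L$ of $\rpq$ is automatically a \emph{linear isometric} automorphism. This is true because $\rpq$ is the open unit ball for the operator norm, and a linear bijection of an open unit ball onto itself must preserve that norm (apply $\|L(x/\|x\|)\|\le 1$ and the same for $L^{-1}$), but Lemma~\ref{lem2} by itself only gives invertibility of $L$ as a linear map on $\IC^{pq}$, not the isometry.
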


First, let us consider that automorphisms that map $0$ to $0$, which are the linear ones.
 K. Morita \cite{mo41} classified the linear isometries of $\rpq$, and the square case 
 differs from the rectangular case, because the transpose is an isometry.
\begin{theorem}
\label{thm:mor}
[Morita] If $p \ne q$, all linear automorphisms of $\rpq$ are of the form 
$x \mapsto U x V$, where $U$ is a $p$-by-$p$ unitary and $V$ is a $q$-by-$q$ unitary.
If $p = q$, the set of linear automorphisms consists of  $x \mapsto U x V$ and
$x \mapsto U x^t V$.
\end{theorem}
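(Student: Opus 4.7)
The goal is to classify the linear maps on $p$-by-$q$ complex matrices that are isometric for the operator norm. A linear map is an automorphism of $\rpq$ if and only if it is a surjective isometry of the operator norm, since it must carry the open unit ball bijectively onto itself. The plan is to determine how such an $L$ acts on rank-one matrices and then extend by linearity, since rank-one matrices span the whole space.

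Step 1 (Preservation of rank-one partial isometries). The central step is to show that $L$ sends the set $\{ uv^* : u \in \IC^p, v \in \IC^q, \|u\| = \|v\| = 1 \}$ to itself. One clean route is via the JB$^*$-triple structure with triple product $\{a,b,c\} = \tfrac{1}{2}(ab^*c + cb^*a)$: the rank-one partial isometries are precisely the minimal tripotents, and the Kaup--Harris theorem that surjective linear isometries between JB$^*$-triples are triple isomorphisms implies that tripotents of a given rank go to tripotents of the same rank. A more elementary alternative characterises $uv^*$ metrically as an extreme point of the set of contractions of rank at most one, which can be detected from the norm alone using face structure of the unit ball.

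Step 2 (Bilinear factorisation and unitarity). Define $\Psi(u,v) := L(uv^*)$. By linearity of $L$, $\Psi$ is additive in each argument and satisfies $\Psi(\lambda u, v) = \lambda \Psi(u,v) = \Psi(u, \bar\lambda v)$. Since each value $\Psi(u,v)$ has rank at most one by Step 1, the classification of rank-one-valued sesquilinear maps (in the spirit of Westwick's rank-preservation theorem) yields two possibilities: either $\Psi(u,v) = \phi(u)\psi(v)^*$ for linear $\phi \colon \IC^p \to \IC^p$ and $\psi \colon \IC^q \to \IC^q$, or, only when $p = q$, $\Psi(u,v) = \phi'(v)\psi'(u)^*$ with linear $\phi', \psi'$. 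The isometric condition $\|\Psi(u,v)\| = 1$ for unit vectors forces the pair $(\phi, \psi)$ to consist of scalar multiples of unitaries $U, V$ with reciprocal scalars; absorbing the scalar yields $L(uv^*) = U(uv^*)V$, respectively $L(uv^*) = U(uv^*)^t V$ in the square case. Extending by linearity to the span of rank-one matrices produces the stated formula on all of the matrix space.

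The principal obstacle is Step 1: deducing from a purely metric isometry hypothesis that the algebraic notion of rank-one partial isometry is preserved. Passing through JB$^*$-triple theory resolves this cleanly but shifts the weight onto a substantial external theorem; a self-contained derivation requires careful extreme-point analysis of the operator-norm unit ball along with the continuity of $L$ on its boundary components, and that is where the technical bulk of the argument lies.
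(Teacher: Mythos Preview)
The paper does not prove this theorem at all: it is quoted as Morita's result with a citation to \cite{mo41} and used as a black box. So there is no ``paper's own proof'' to compare against; your proposal is supplying an argument where the paper supplies none.

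As for the content of your sketch: the overall strategy (show that $L$ preserves a distinguished class of rank-one matrices, then factorise) is sound and is indeed one standard modern route. A couple of points deserve tightening. In Step~1, your ``elementary alternative'' is vague as written: the extreme points of the operator-norm unit ball of $p\times q$ matrices are the \emph{maximal} partial isometries (rank $\min(p,q)$), not the rank-one ones, and the set of rank-$\le 1$ contractions is not convex, so talking about its extreme points is not quite right. The JB$^*$-triple route via Kaup's theorem is correct but, as you note, imports heavy machinery that postdates Morita by decades; if you want something closer to a direct argument, work instead with the face lattice of the unit ball (minimal faces correspond to minimal tripotents) or start from the extreme points and use that $L$ must preserve the poset of faces. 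In Step~2, the appeal to a Westwick-type classification is the right idea, but you should be explicit that the ``swapped'' alternative $\Psi(u,v)=\phi'(v)\psi'(u)^*$ is only dimensionally possible when $p=q$, which is exactly what forces the dichotomy in the statement.
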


The map $x \mapsto UxV$ extends to the nc automorphism of $\Rpq$ given by
$Z \mapsto (\id \ot U) Z (\id \ot V)$; but the transpose does not extend.

\begin{lemma}
\label{leme1} If $p > 1$,
the map $x \mapsto x^t$ does not extend to an nc automorphism of ${\bf R}_{pp}$.
\end{lemma}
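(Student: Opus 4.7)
Suppose for contradiction that $\Phi$ is an nc-automorphism of $\Rpp$ whose restriction to level $1$ is $x \mapsto x^t$. Since $\Rpp$ is a circular, nc-bounded nc-domain containing $0$ at every level, and the transpose fixes $0$, Theorem~\ref{propa}(ii) applied with $m = 1$ yields an invertible linear $F\colon \IC^{p^2} \to \IC^{p^2}$ with $\Phi(Z) = (F \otimes \id_n)(Z)$ at level $n$; the level-$1$ hypothesis forces $F$ to be the coordinate permutation sending $E_{ij}$ to $E_{ji}$ in $\m_p$.

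The next step is to unwind the action of $F \otimes \id_n$ in terms of $\gamma$. Under the standard identification $\m_p \otimes \m_n \cong \m_{pn}$ with $\gamma(Z) = \sum_{i,j} E_{ij} \otimes Z^{(i-1)p+j}$, one reads off immediately that $\gamma(\Phi(Z))$ is the \emph{block transpose} of $\gamma(Z)$: the block positions $(i,j)\leftrightarrow(j,i)$ are swapped while the $n \times n$ block entries themselves are left untouched. The claim is thus reduced to exhibiting a $Z$ at some level $n$ with $\|\gamma(Z)\| < 1$ but $\|\gamma(Z)^{\mathrm{bt}}\| \geq 1$.

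For $p = n = 2$, take
\[
\gamma(Z) \;=\; \alpha \begin{pmatrix} 0 & 0 \\ E_{11} & E_{21} \end{pmatrix},
\qquad \alpha \in (1/\sqrt{2},\,1),
\]
where $E_{ij}$ are the matrix units in $\m_2$. A direct computation of $\gamma(Z)\gamma(Z)^*$ gives $\|\gamma(Z)\| = \alpha < 1$, so $Z$ lies in $\Rpp$ at level $2$, whereas
\[
\gamma(Z)^{\mathrm{bt}} \;=\; \alpha \begin{pmatrix} 0 & E_{11} \\ 0 & E_{21} \end{pmatrix}
\]
satisfies $\|\gamma(Z)^{\mathrm{bt}}\| = \alpha\sqrt{2} > 1$, so $\Phi(Z) \notin \Rpp$, contradicting that $\Phi$ is an automorphism. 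For $p > 2$ I would place this $2 \times 2$ pattern in the top-left $2 \times 2$ sub-array of $p \times p$ blocks with zeros elsewhere; both norms are determined by the nonzero sub-pattern, so the same inequality persists.

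The one conceptual step is the identification in the second paragraph: recognising that the tensor extension produced by Theorem~\ref{propa}(ii) acts at level $n$ as a \emph{block} transpose of $\gamma(Z)$ rather than as an entrywise transpose. Once that is in place, the failure of the block transpose to be contractive is forced by the familiar asymmetry between $MM^*$ and $M^*M$, and the rest of the argument is routine.
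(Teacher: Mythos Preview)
Your argument is correct and follows the same skeleton as the paper's proof: both invoke Theorem~\ref{propa} to pin down the would-be extension as the block transpose $(X_{i,j}) \mapsto (X_{j,i})$ on $\m_p(\m_n)$, and then observe that this fails to preserve $\Rpp$. The only difference is that the paper stops at that point and appeals to the well-known fact that the transpose on $\m_p$ is not a complete isometry, whereas you make the argument self-contained by exhibiting an explicit $4 \times 4$ witness (essentially the standard example showing transpose is not $2$-isometric); your version is longer but requires no external reference.
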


\begin{proof}
	Using Theorem~\ref{propa},
	if the transpose did extend, the extension would map
	$(X_{i,j}) \in \m_p(\m_n(\IC))$ to $(X_{j,i})$, and so
	this reduces to the well-known
	fact that the transpose map is not a complete isometry of
	$\m_p$.
\end{proof}

Let us turn now to $H_A$.
The map  $H_A$ extends
 to an nc map from 
$\Rpq$ to $\Rpq$ given by
\begin{equation}
\label{eqc6}
H_A(Z) \ = \
( I_{n,p}  - \id \ot A A^* )^{-1/2}  
  (Z  + \id \ot A)
(I_{n,q} +( \id \ot A^* ) Z  )^{-1}
( I_{n,q}  - \id \ot  A^* A)^{1/2}  
\end{equation}
Here, $\id$ means $\idn$, and
$I_{n,r}$ denotes $\id_{\IC^n \ot \IC^r}$.

A calculation shows that
\begin{eqnarray}
\nonumber
\lefteqn{\id - H_A (W)^* H_A(Z) \ = \
( I_{n,q}  - \id \ot  A^* A)^{1/2}  
( I_{n,q} + W^* (\id \ot A))^{-1}  } \\
&(I_{n,q} - W^* Z) 
 ( I_{n,q} + ( \id \ot A^*) Z )^{-1}
( I_{n,q}  - \id \ot  A^* A)^{1/2}  .
\label{eqc7}
\end{eqnarray}
Letting $W = Z$ proves that $H_A$ maps $\Rpq$ to $\Rpq$, and as $H_{-A}$ is the inverse
of $H_A$, it must be an automorphism.

Putting these results together, we get the following theorem.
The case $p=1$ was proved by Popescu \cite[Thm. 1.5]{po10}.
 The general case was
  proved by 
  Helton, Klep, McCullough and Slinglend \cite[Thm. 1.7]{hkms09},
  though their hypotheses are 
  stronger. The linear case was proved by D. Blecher and D. Hay
  \cite{bh03}.
\begin{theorem}
\label{thme1}
If $p \ne q$, then 
every automorphism of $\rpq$ extends uniquely to an automorphism of $\Rpq$.
If $p =q$, the automorphisms of the form $x \mapsto U H_A(x) V$ extend uniquely to
${\bf R}_{pp}$, and the automorphisms of the form
$x \mapsto U H_A(x)^t V$ (when $p>1$)
do not extend to automorphisms of ${\bf R}_{pp}$.
\end{theorem}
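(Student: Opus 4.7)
The plan is to decompose any automorphism $LH_A$ of $\rpq$ supplied by Harris (Theorem~\ref{thm:ha}) into its linear part and its M\"obius part, extend each separately, and then invoke the rigidity result Theorem~\ref{propa} for uniqueness. By Morita's Theorem~\ref{thm:mor}, the linear piece $L$ has the form $x\mapsto UxV$ in the rectangular case $p\ne q$, with the extra possibility $x\mapsto Ux^tV$ when $p=q$. The map $x\mapsto UxV$ extends at once to the nc automorphism $Z\mapsto(\id\ot U)Z(\id\ot V)$, and the M\"obius piece $H_A$ has already been extended via formula~\eqref{eqc6}: identity~\eqref{eqc7} with $W=Z$ shows the extension maps $\Rpq$ into itself, and $H_{-A}$ serves as an inverse. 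Composing these two nc automorphisms produces an nc extension of every automorphism of the form $x\mapsto UH_A(x)V$.

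Uniqueness of this extension is an immediate application of Theorem~\ref{propa}(i) at level $m=1$, noting $0\in\rpq$ and $\rpq=\Rpq\cap\mathbb{M}_1^{d}$. Indeed, if two nc automorphisms $\Phi_1,\Phi_2$ of $\Rpq$ both restrict to the same automorphism on $\rpq$, then $\Phi_2^{-1}\circ\Phi_1$ is an nc automorphism of $\Rpq$ fixing every point of $\rpq$, in particular fixing $0$ with derivative equal to the identity, and so must equal the identity on all of $\Rpq$.

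The step that demands the most care is the square case: ruling out nc extensions of the automorphisms $\phi(x)=UH_A(x)^tV$ when $p>1$. The plan is to reduce this to Lemma~\ref{leme1} by manufacturing, from any putative nc extension $\Phi$ of $\phi$, an nc extension of the bare transpose. Explicitly, set
\[
\Psi(Z)\ :=\ (\id\ot U^{-1})\,\Phi\bigl(H_{-A}(Z)\bigr)\,(\id\ot V^{-1}).
\]
Since the conjugations by $U^{-1}$, $V^{-1}$ and the nc map $H_{-A}$ are all already known to be nc automorphisms of ${\bf R}_{pp}$, the composition $\Psi$ is an nc automorphism of ${\bf R}_{pp}$; a direct computation on level $1$ using $H_A\circ H_{-A}=\id$ gives $\Psi(x)=x^t$, contradicting Lemma~\ref{leme1}. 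The genuine analytical work has all been done upstream (Harris, Morita, Theorem~\ref{propa}, the construction of $H_A$, and Lemma~\ref{leme1}), so the main obstacle in this theorem is pre-packaged: this proof is essentially an assembly of those ingredients via the algebraic reduction above.
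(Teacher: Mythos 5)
Your proposal is correct and takes essentially the same route as the paper, which assembles exactly these ingredients (Harris, Morita, the extensions of $Z\mapsto(\id\ot U)Z(\id\ot V)$ and of $H_A$ via \eqref{eqc6}--\eqref{eqc7}, Theorem~\ref{propa} for uniqueness, and Lemma~\ref{leme1} for the transpose obstruction). You in fact make explicit one step the paper leaves implicit: the reduction $\Psi(Z)=(\id\ot U^{-1})\,\Phi(H_{-A}(Z))\,(\id\ot V^{-1})$ turning a hypothetical extension of $x\mapsto UH_A(x)^tV$ into an extension of the bare transpose.
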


By a result of J. Ball and V. Bolotnikov \cite{babo04},
 (see also \cite{at03} and \cite{amy13a})
$H_A$ extends to an endomorphism of the {\em commuting} elements of $\gamma^{-1}(\Rpq)$
 if and only if there is some function $F$ so that 
\[
I - H_A (w)^* H_A(z)
\ = \
F(w)^* (I - w^* z) F(z)
\]
as a kernel on $\rpq$.
This is true, as \eqref{eqc7} shows. So the nc automorphisms of
$\{ x \in \gamma^{-1} (\Rpq) : x^i x^j = x^j x^i, \forall 1 \leq i,j \leq d \}$
are the same as the nc automorphisms of $\gamma^{-1} (\Rpq)$.
This phenomenon has also been explored in \cite{amfree}.

\begin{question} The automorphisms of $\Rpq$ are not transitive at any level $n \geq 2$. What can one say  about the orbits?
\end{question}

Theorem~\ref{thme1} can be extended slightly. For $S$ a
subset of $\IN$ that is closed under addition, let  
$\Rpq(S)$ be defined by
 $\Rpq(S) \cap \mnd$ is $\Rpq \cap \mnd$ if $n \in S$, and empty otherwise.

\begin{proposition}
\label{propd1}
Let $S$ be any non-empty sub-semigroup of $\IN$. Then the automorphisms of $\Rpq(S)$ are the
same as the automorphisms of $\Rpq$, and are uniquely determined by their action on any non-empty level.
\end{proposition}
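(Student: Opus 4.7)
The plan is to establish, in order: (a) uniqueness of any automorphism of $\Rpq(S)$ given its restriction to a single non-empty level, and (b) extendibility of every such automorphism to an automorphism of $\Rpq$.

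For (a), fix any $n \in S$. Since $S$ is closed under addition, $kn \in S$ for every $k \geq 1$, and for any other $m \in S$ one also has $mn \in S$ (as the sum of $m$ copies of $n$). The proof of Theorem~\ref{propa}(i) therefore carries over verbatim inside $\Rpq(S)$: if $\Phi, \Phi' \in \mathrm{Aut}(\Rpq(S))$ agree on $\Rpq(S)_n$, we apply the Cartan/direct-sum argument (with the role of $m$ there played by our $n$) to $\Phi^{-1} \circ \Phi'$ to conclude $\Phi = \Phi'$ on every level of $\Rpq(S)$. In particular, this establishes that the restriction map $\mathrm{Aut}(\Rpq) \to \mathrm{Aut}(\Rpq(S))$ is injective.

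For (b), given $\Phi \in \mathrm{Aut}(\Rpq(S))$, the similarity property forces each $\Phi(0_n)$ to be scalar in every coordinate, and the direct-sum property then forces a single tuple $c \in \IC^d$ with $\Phi(0_n) = c \otimes I_n$ for all $n \in S$. Since $\|\gamma(c) \otimes I_n\| = \|\gamma(c)\| < 1$, the matrix $C := \gamma(c)$ lies in $\rpq$, and replacing $\Phi$ by $H_{-C} \circ \Phi$ (still an automorphism of $\Rpq(S)$, as $H_{-C}$ extends to an automorphism of $\Rpq$ via (\ref{eqc6})) reduces to the case $\Phi(0_n) = 0$ for every $n \in S$. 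Fix $n_0 \in S$. Lemma~\ref{lem2} applied to the bounded circular domain $\Rpq(S)_{n_0}$ says $\Phi_{n_0}$ is linear, and Morita's theorem (Theorem~\ref{thm:mor}) applied to the rectangular matrix ball of size $n_0 p \times n_0 q$, isomorphic to $\Rpq(S)_{n_0}$ via $\gamma$, gives $\Phi_{n_0}(Z) = \gamma^{-1}(U \gamma(Z) V)$ for unitaries $U$ of size $n_0 p$ and $V$ of size $n_0 q$, or (only when $p = q$) possibly $\gamma^{-1}(U \gamma(Z)^t V)$.

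The principal obstacle is to show $U = \tilde U \otimes I_{n_0}$ and $V = \tilde V \otimes I_{n_0}$ for some $p \times p$ unitary $\tilde U$ and $q \times q$ unitary $\tilde V$, and to rule out the transpose alternative. The tool is similarity invariance at level $n_0$: the identity $\gamma(s^{-1} Z s) = (I_p \otimes s^{-1}) \gamma(Z) (I_q \otimes s)$ converts $\Phi_{n_0}(s^{-1} Z s) = s^{-1} \Phi_{n_0}(Z) s$ into $U (I_p \otimes s^{-1}) Y (I_q \otimes s) V = (I_p \otimes s^{-1}) U Y V (I_q \otimes s)$ for every $n_0 p \times n_0 q$ matrix $Y$ and every invertible $s \in \m_{n_0}$. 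The elementary fact that ``$X Y = Y W$ for all $Y$ forces $X, W$ to be a common scalar,'' combined with a determinant/continuity argument to pin that scalar to $1$ near $s = I$, shows that $U$ commutes with every $I_p \otimes s$; the centralizer in $\m_{n_0 p} = \m_p \otimes \m_{n_0}$ then yields $U = \tilde U \otimes I_{n_0}$, and the same reasoning handles $V$. The transpose alternative instead demands $\mathrm{spec}(s) = \lambda/\mathrm{spec}(s)$ for every invertible $s$, which fails for a generic $s$ as soon as $n_0 \geq 2$; the case $n_0 = 1$ forces $S = \IN$ and makes the proposition tautological. With $(\tilde U, \tilde V)$ in hand, define $\tilde \Phi := H_C \circ L \in \mathrm{Aut}(\Rpq)$ where $L(x) := \tilde U x \tilde V$ is the linear automorphism of $\Rpq$ supplied by Theorem~\ref{thme1}; by construction $\tilde\Phi$ agrees with $\Phi$ on $\Rpq(S)_{n_0}$, and uniqueness from (a) then yields $\tilde\Phi|_{\Rpq(S)} = \Phi$, proving both surjectivity of the restriction map and uniqueness of the extension.
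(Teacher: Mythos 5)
The paper states Proposition~\ref{propd1} without proof, so there is nothing to compare line by line; your overall strategy (uniqueness via the direct-sum/Cartan argument of Theorem~\ref{propa}, which indeed goes through verbatim because $S$ is additively closed, followed by classification of the level-$n_0$ restriction via Lemma~\ref{lem2} and Morita, and then matching with an automorphism of $\Rpq$ supplied by Theorem~\ref{thme1}) is exactly the intended route, and most of the steps, including the reduction to $\Phi(0)=0$ via $H_{-C}$ and the centralizer argument giving $U=\tilde U\otimes I_{n_0}$, are correct.

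There is, however, one concrete error: the claim that the transpose alternative forces $\mathrm{spec}(s)=\lambda/\mathrm{spec}(s)$ for all invertible $s$ and that this ``fails for a generic $s$ as soon as $n_0\geq 2$.'' It fails generically only for $n_0\geq 3$. For $n_0=2$ the condition holds for \emph{every} invertible $s$ with $\lambda=\det s$ (the two eigenvalues of a $2\times 2$ matrix multiply to $\det s$), and the obstruction is not merely spectral but genuinely absent: with $t=\left(\begin{smallmatrix}0&1\\-1&0\end{smallmatrix}\right)$ one has the adjugate identity $t^{-1}st=\det(s)\,(s^{-1})^{t}$, from which one checks that $Z\mapsto\gamma^{-1}\bigl((I_p\otimes t)\,\gamma(Z)^{t}\,(I_p\otimes t^{-1})\bigr)$ is a linear, similarity-equivariant, transpose-type automorphism of $\Rpq(S)_2$. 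So your level-$n_0$ analysis cannot eliminate the transpose when $n_0=2$; one must bring in another level (as Lemma~\ref{leme1} does via the failure of the transpose to be a complete isometry, or as Theorem~\ref{propa}(ii) does via direct sums into level $2n_0$, where the spectral obstruction does bite). The gap is easily repaired: since $S$ is a sub-semigroup, it contains some $n_0\geq 3$ (take $3n$ for any $n\in S$, unless $1\in S$, in which case $S=\IN$ and Theorem~\ref{thme1} applies directly); run your argument at that level and then use part (a) to propagate the identification $\Phi=\tilde\Phi$ to all levels of $\Rpq(S)$, including level $2$ if $2\in S$. With that adjustment the proof is complete.
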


\begin{example}
\label{exd2}
Extendibility can fail if the pieces of $\Omega$ at different levels are not somehow alike.
For example, let $d=1$, and $R > 1$.
Define $\Omega$ by $\Omega \cap \m_1 = \ID$, and $\Omega \cap \mn = \{ x : \| x \| < R \}$.
The automorphisms of $\Omega \cap \m_1$ are the M\"obius maps, but only multiplication by 
$e^{i\theta}$ extends to be an automorphism of $\Omega$.

But there are many other choices of nc-domain $\Omega \supset {\bf R}_{11}$ that have the same automorphism group.
For example, let $r_1 = 1$, and let $(r_n)_{n=1}^\infty$ be any non-decreasing sequence.
Define $\Omega$ by
\begin{equation}
	\label{eq:spectarldisk}
\Omega_n \ = \ \{ x \in \mn : \ \exists s \in \mn, {\ \rm with \ } \| s \| \| s^{-1} \| \leq r_n,\ {\rm and\ }
\| s^{-1} x s \| < 1 \} .
\end{equation}
Then $\O$ is an nc-domain, and its automorphism group is the set of M\"obius maps.
\oec
\end{example}

\begin{question}
\label{quest5}
Let $U \subset \IC^d$
be a bounded symmetric domain. Is there  an nc-domain $\O \subset \md$
such that
$\O_1 = U$ and such that the automorphism group of $\O$ equals the automorphism group
of $U$?
\end{question}

\bibliography{references}

\end{document}